\documentclass{elsarticle}
\makeatletter
\def\ps@pprintTitle{%
 \let\@oddhead\@empty
 \let\@evenhead\@empty
 \def\@oddfoot{\centerline{\thepage}}%
 \let\@evenfoot\@oddfoot}
\makeatother

\usepackage{amsmath, amssymb, amsthm}
\usepackage{array}
\usepackage{graphics}
\usepackage{epsfig}
\usepackage{color}
\usepackage{booktabs}
\usepackage{caption}
\usepackage{subcaption}

\def\ta{\tilde{a}}
\def\tb{\tilde{b}}
\def\tbfx{\tilde{\bfx}}

\def\Sage{\texttt{Sage}}

\def\ud{{\rm d}}

\def\deg{{\rm deg}}

\def\rank{{\rm rank}}

\def\Bbb#1{\mathbb{#1}}

\def\CCC{\mathcal{C}}
\def\LLL{\mathcal{L}}
\def\PPP{\mathcal{P}}

\def\bfa{\mathbf{a}}
\def\bfb{\mathbf{b}}

\def\bfi{\mathbf{i}}

\def\bfx{\mathbf{x}}

\def\CC{\mathbb{C}}

\def\PP{\mathbb{P}}
\def\RR{\mathbb{R}}

\newtheorem{theorem}{{\bf Theorem}}

\newtheorem{corollary}[theorem]{{\bf Corollary}}
\newtheorem{proposition}[theorem]{{\bf Proposition}}
\newtheorem{lemma}[theorem]{{\bf Lemma}}

\pdfoutput=1

\begin{document}
\begin{frontmatter}


\title{Detecting Symmetries\\ of Rational Plane and Space Curves}


\author[a]{Juan Gerardo Alc\'azar\fnref{proy}}
\ead{juange.alcazar@uah.es}
\author[a]{Carlos Hermoso}
\ead{carlos.hermoso@uah.es}
\author[b]{Georg Muntingh\fnref{grant}}
\ead{georgmu@math.uio.no}

\address[a]{Departamento de F\'{\i}sica y Matem\'aticas, Universidad de Alcal\'a, E-28871 Madrid, Spain}
\address[b]{SINTEF ICT, PO Box 124 Blindern, 0314 Oslo, Norge, and\\ Department of Mathematics, University of Oslo, PO Box 1053, Blindern, 0316 Oslo, Norge}

\fntext[proy]{Supported by the Spanish ``Ministerio de Ciencia e Innovacion" under the Project MTM2011-25816-C02-01. Member of the Research Group {\sc asynacs} (Ref. {\sc ccee2011/r34})}

\fntext[grant]{Partially supported by the Giner de los R\'ios grant from the Universidad de Alcal\'a.}


\begin{abstract}
This paper addresses the problem of determining the symmetries of a plane or space curve defined by a rational parametrization. We provide effective methods to compute the involution and rotation symmetries for the planar case. As for space curves, our method finds the involutions in all cases, and all the rotation symmetries in the particular case of Pythagorean-hodograph curves. Our algorithms solve these problems without converting to implicit form. Instead, we make use of a relationship between two proper parametrizations of the same curve, which leads to algorithms that involve only univariate polynomials. These algorithms have been implemented and tested in the \Sage~system.
\end{abstract}

\end{frontmatter}

\section{Introduction}\label{sec:introduction}
\noindent The problem of detecting the symmetries of a curve has been studied extensively, mainly because of its applications in Pattern Recognition, Computer Graphics and Computer Vision.

In Pattern Recognition, a common problem is how to choose, from a database of curves, the one which best suits a given object, represented by means of an equation \cite{Huang, Lei, Sener, TC00, Tasdizen, Taubin1}. Before a comparison can be carried out, one must bring the shape that needs to be identified into a canonical position. Thus it becomes necessary to compute the symmetries of the studied curve. In this context, the computation of symmetries has been addressed using splines \cite{Huang}, by means of differential invariants \cite{Boutin, Calabi, Weiss}, using a complex representation of the implicit equation of the curve \cite{LR08, LRTh, TC00}, and using moments \cite{Huang, Suk1, Suk2, Taubin2}.

In Computer Graphics, the detection of symmetries and similarities is important, both in the 2D and the 3D case, to gain understanding when analyzing pictures, and also in order to perform tasks like compression, shape editing or shape completion. Many techniques involve statistical methods and, in particular, clustering; see for example the papers \cite{Berner08, Bokeloh, Mitra06, Podolak}, where the technique of transformation voting is used. Other techniques are robust auto-alignment \cite{Simari}, spherical harmonic analysis \cite{Martinet}, primitive fitting \cite{Schnabel}, and spectral analysis \cite{Lipman}, to quote a few.

In Computer Vision, symmetry is important for object detection and recognition. In this context, an analysis has been carried out using the Extended Gauss Image \cite{Sun} and using feature points \cite{Loy}. In addition, there are algorithms for computing the symmetries of 2D and 3D discrete objects \cite{Alt88, Brass, Jiang, Li08} and for boundary-representation models \cite{Li08, Li10, Tate}.

In the case of discrete objects (polygons, polyhedra), the symmetries can be determined exactly \cite{Alt88, Brass, Jiang, Li08}. This can be generalized to the case of more complicated shapes whose geometry is described by a discrete object, as done in \cite{Brass}, where an efficient algorithm is provided. Examples of this situation appear with B\'ezier curves and tensor product surfaces, where the shape follows from the geometry of the control points. However, in almost all of the other above references, the goal is to find \emph{approximate} symmetries of the shape. This is perfectly adequate in many applications, because the input is often a `fuzzy' shape, with missing or occluded parts in some cases. In fact, even if the input is exact, it is often an approximate, simplified model of a real object. Here we shall consider a different perspective. We assume that our input is exact, and we want to deterministically detect the existence and nature of its symmetries, without converting to implicit form. More precisely, our input will be either a plane or a space curve $\CCC$ defined by means of a rational parametrization with integer coefficients. Our goal is to (1) determine whether $\CCC$ has any symmetries, and (2) determine all symmetries in the affirmative case.

Notice that since we are dealing with a {\it global} object, i.e., the whole curve $\CCC$, we do not have a control polygon from which the geometry of the curve, and in particular its symmetries, can be derived. This could be the case if we were addressing a piece of $\CCC$, at least when $\CCC$ admits a polynomial parametrization. In that situation, $\CCC$ could be brought into B\'ezier form, and then an algorithm like \cite{Brass} could be applied. In fact, in that case the algorithm of \cite{Brass} would be computationally more effective than ours, since essentially the analysis follows from a discrete object. However, this idea is no longer applicable when the whole curve is considered.

Additionally, an analysis of \emph{approximate} symmetries of rational curves could be attempted by sampling points on the curve, and then applying algorithms like  \cite{Brass, Li08, Li10}. In that case, the question is how to choose suitable zones for sampling, which amounts to collecting some information on the shape of the curve \cite{ADT}. A natural strategy is to look for notable points on the curve, like singularities, inflection points or vertices: Since any symmetry maps notable points of a certain nature to the same kind of points or leaves them invariant, one might sample around these points.
One thus obtains clusters of points that must be compared. There would be various possibilities for comparing these clusters, depending on the kind of symmetry one is looking for, all which should be explored. Still, this approach only leads to an approximate estimate on the existence of symmetries, which is a different problem than the one considered in this paper.

Up to our knowledge, the deterministic problem for whole curves has only been solved in the case of implicit plane curves \cite{LR08, LRTh} and in the case of polynomially parametrized plane curves \cite{A13}. The case of space curves seems absent from the literature. In \cite{LR08, LRTh}, the authors provide an elegant method to detect rotation symmetry of an implicitly defined algebraic curve, and efficiently find the exact rotation angle and rotation center. The method uses a complex representation $F(z,\bar{z})=0$ of the curve. Some cases not treated in \cite{LR08} are completed in \cite{LRTh}, where similar ideas are applied to detect mirror symmetry. In contrast, our method applies directly to the parametrization, which is the most common representation in CAGD, avoiding the conversion into implicit form. The approach in \cite{A13} is similar to ours, although it should be noted that restricting to polynomial parametrizations yields an advantage for solving the problem fast and efficiently.

The main ingredient in our method is the underlying relation between two parametrizations of a curve that are \emph{proper}, i.e., injective except perhaps for finitely many values of the parameter. Essentially, whenever a symmetry is present, this symmetry induces an alternative parametrization of the curve. Furthermore, if the starting parametrization is proper, this second parametrization is also proper. Since two proper parametrizations of a same curve are related by means of a M\"obius transformation \cite{SWPD}, we can reduce the problem to finding this transformation. Thus, \emph{involutions}, i.e., symmetries with respect to a point, line or plane, can be detected and determined for plane and space curves. For rotations, we need one more ingredient: a formulation in terms of complex numbers for plane curves, or the Pythagorean-hodograph assumption for space curves. In practice, our methods boil down to computing greatest common divisors and finding real roots of univariate polynomials, which are tasks that can be performed efficiently.

\section{Symmetries of plane and space curves}\label{sec:generalities}
\noindent Throughout the paper we shall consider a rational curve $\CCC\subset \RR^n$, where $n=2$ or $n=3$, neither a line nor a circle, defined by means of a proper rational parametrization
\begin{equation}\label{eq:C}
\bfx:\RR \dashrightarrow \CCC\subset \RR^n,\qquad \bfx(t)= \big(x_1(t),x_2(t),\ldots,x_n(t)\big),
\end{equation}
where
\[ x_i(t) = \frac{p_i(t)}{q_i(t)}, \qquad p_i,q_i\in \RR[t],\qquad \gcd(p_i, q_i) = 1,\qquad i=1,\ldots,n. \] Here ``$\gcd$'' refers to the greatest common divisor. Since $\CCC$ is rational, it is irreducible. One can check whether a parametrization of a plane curve is proper, and every rational plane curve can be properly reparametrized without extending the ground field. See \cite{SWPD} for a thorough study on properness and a proof of these claims, and see \cite[\S 3.1]{A12} for similar results for rational space curves.

We recall some facts from Euclidean geometry \cite{Coxeter}. An \emph{isometry} of $\RR^n$ is a map $f:\RR^n\longrightarrow \RR^n$ preserving Euclidean distances. Any isometry $f$ of $\RR^n$ is linear affine, taking the form \begin{equation}\label{eq:isometry}
f(\bfx) = Q\bfx + \bfb, \qquad \bfx\in \RR^n,
\end{equation}
with $\bfb \in \RR^n$ and $Q\in \RR^{n\times n}$ an orthogonal matrix. In particular $\det(Q) = \pm 1$. The isometries of the plane and space form a group under composition that is generated by reflections, i.e., symmetries with respect to a hyperplane, or \emph{mirror symmetries}. An isometry is called \emph{direct} when it preserves the orientation, and \emph{opposite} when it does not. In the former case $\det(Q) = 1$, while in the latter case $\det(Q) = -1$. The identity map $\textup{id}_{\RR^n}$ of $\RR^n$ is called the \emph{trivial symmetry}. An isometry $f(\bfx) = Q\bfx + \bfb$ of $\RR^n$ is called an \emph{involution} if $f\circ f = \textup{id}_{\RR^n}$, in which case $Q^2 = I$ is the identity matrix and $\bfb\in\ker(Q+I)$.

The nontrivial isometries of the Euclidean plane are classified into reflections, rotations, translations, and glide reflections. The special case of \emph{central symmetries} is of particular interest and corresponds to a rotation by an angle~$\pi$. Central and mirror symmetries are involutions.

The classification of the nontrivial isometries of Euclidean space again includes reflections (in a plane), rotations (about an axis), and translations, and these combine in commutative pairs to form twists, glide reflections, and rotatory reflections. Composing three reflections in mutually perpendicular planes through a point $P$, yields a \emph{central inversion} with center $P$, i.e., a symmetry with respect to the point $P$. The special case of rotation by an angle $\pi$ is again of special interest, and it is called an \emph{axial symmetry}. Central inversions, reflections, and axial symmetries are involutions.

By B\'ezout's theorem, an algebraic curve other than a line cannot be invariant under a translation or glide reflection, and a space curve can, in addition, not be invariant under a twist. We shall refer to the remaining isometries as \emph{symmetries}, and we shall say that a plane or space curve $\CCC$ is \emph{symmetric}, if it is invariant under a nontrivial symmetry. Any algebraic curve in the plane, neither a line nor a circle, has finitely many symmetries \cite[\S 5]{LRTh}. We need the following lemma to show the same result for \emph{nondegenerate} space curves, i.e., space curves not contained in a plane.

\begin{lemma} \label{lem:angles}
Let $\CCC\subset \RR^3$ be a nondegenerate irreducible space curve, invariant under a rotation with axis $\LLL$ and angle $\theta$. Then $\theta = 2\pi/k$, with $k\leq \deg(\CCC)$ an integer.
\end{lemma}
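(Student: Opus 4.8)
The plan is to study the full collection of rotations about the fixed axis $\LLL$ that preserve $\CCC$, rather than the single rotation by $\theta$, and to show that this collection forms a finite cyclic group whose order is bounded by $\deg(\CCC)$. Concretely, the rotations of $\RR^3$ about $\LLL$ form a group isomorphic to the circle $\mathrm{SO}(2)\cong\RR/2\pi\ZZ$, and inside it the angles $\alpha$ for which the rotation $R_\alpha$ about $\LLL$ maps $\CCC$ onto itself form a subgroup $G$. By hypothesis $\theta\in G$. Everything then reduces to proving that $G$ is finite with $|G|\le\deg(\CCC)$.

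For the finiteness and the degree bound I would use a B\'ezout argument in a plane transverse to the axis. Since $\CCC$ is nondegenerate it is not contained in any plane, and in particular it is not the line $\LLL$, so $\CCC\cap\LLL$ is finite and I can fix a point $P\in\CCC$ not lying on $\LLL$. Let $\Pi$ be the plane through $P$ perpendicular to $\LLL$. Every rotation $R_\alpha$ about $\LLL$ maps $\Pi$ to itself, and when $\alpha\in G$ it also maps $\CCC$ to itself; hence $R_\alpha(P)\in\CCC\cap\Pi$ for all $\alpha\in G$. Because $P$ is off the axis, distinct elements of $G$ (read in $[0,2\pi)$) send $P$ to distinct points, so the map $\alpha\mapsto R_\alpha(P)$ embeds $G$ into the finite set $\CCC\cap\Pi$. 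As $\CCC$ is nondegenerate it is not contained in $\Pi$, so B\'ezout's theorem gives $|\CCC\cap\Pi|\le\deg(\CCC)$, whence $G$ is finite and $|G|\le\deg(\CCC)$.

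To finish, I would invoke the classification of finite subgroups of the circle group: any finite subgroup of $\mathrm{SO}(2)$ is cyclic, generated by the rotation through $2\pi/k$, where $k=|G|$. Thus the admissible rotation angles are exactly the integer multiples of $2\pi/k$; the smallest positive one, i.e.\ the angle of the generating symmetry, is $2\pi/k$, and the given $\theta$ is one of these multiples, with $k\le\deg(\CCC)$ as required.

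I expect the main obstacle to lie in the transversality and B\'ezout step rather than in the group theory. Two points need care: first, that a point $P\in\CCC\setminus\LLL$ exists and that the perpendicular plane $\Pi$ does not contain $\CCC$ (both following from nondegeneracy), so that $\CCC\cap\Pi$ is genuinely finite; and second, that the rotation really embeds $G$ into this intersection, which requires $P$ to be off the axis so that no nontrivial rotation about $\LLL$ fixes it. Once these are in place, both the degree bound and the cyclicity follow immediately.
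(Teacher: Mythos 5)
Your proof is correct and follows essentially the same route as the paper's: both intersect $\CCC$ with a plane $\Pi$ normal to $\LLL$ and bound the resulting finite intersection by $\deg(\CCC)$, with the paper compressing your explicit group-theoretic step (the angles preserving $\CCC$ form a finite, hence cyclic, subgroup of $\mathrm{SO}(2)$) into the one-line assertion that $\theta = 2\pi/k$ with $k$ at most the number of points of $\CCC\cap\Pi$. If anything, your version is slightly more careful, since the orbit argument strictly yields only that $\theta$ is an integer multiple of $2\pi/k$, a point your final paragraph addresses explicitly while the paper's terse statement glosses over it.
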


\begin{proof}
For any plane $\Pi$ normal to $\LLL$, a rotation about $\LLL$ induces a rotation of the same angle on $\Pi$ around the point $P := \LLL\cap \Pi$. But then $\theta = 2\pi/k$, with $k$ an integer that is at most the number of points in the intersection $\CCC\cap \Pi$; however, this is at most $\deg(\CCC)$ by definition of the degree of a nondegenerate irreducible curve.
\end{proof}

\begin{lemma}\label{lem:ParallelRotations}
Let $\CCC\subset \RR^3$ be an irreducible space curve, invariant under two rotations $f_1, f_2$ with axes $\LLL_1, \LLL_2$. Then $\LLL_1, \LLL_2$ cannot be parallel.
\end{lemma}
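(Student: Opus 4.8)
The plan is to argue by contradiction: assuming $\LLL_1$ and $\LLL_2$ are parallel but distinct, I will manufacture from $f_1$ and $f_2$ a nontrivial translation leaving $\CCC$ invariant, which is impossible by B\'ezout's theorem since $\CCC$ is not a line. The device that produces this translation is the group commutator $h := f_1\circ f_2\circ f_1^{-1}\circ f_2^{-1}$.

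First I would normalize coordinates so that the common direction of the two axes is the $z$-axis. Then each $f_i$ fixes every point of $\LLL_i$ and rotates the perpendicular planes $\{z=\text{const}\}$ by its angle $\theta_i$; concretely $f_i(\bfx)=Q_i\bfx+\bfb_i$, where $Q_i$ is the planar rotation by $\theta_i$ acting on the first two coordinates and the identity on the third, and $\bfb_i$ has vanishing third coordinate. Hence every element of the group $\langle f_1,f_2\rangle$ leaves the $z$-coordinate unchanged and acts on the $(x,y)$-plane as a direct isometry, that is, a rotation or a translation, and such a map is a pure translation exactly when its rotational part equals the identity.

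The second step is to compute the rotational part of $h$. Because $SO(2)$ is abelian, the linear parts multiply to $Q_1Q_2Q_1^{-1}Q_2^{-1}=I$, so $h$ is a translation of $\RR^3$ by a vector lying in the $(x,y)$-plane. It remains to check that this translation is nontrivial. Writing $r_i$ for the planar rotation induced by $f_i$, with center $P_i$ the point where $\LLL_i$ meets $\{z=0\}$, I would observe that $r_1 r_2 r_1^{-1}$ is the rotation by $\theta_2$ about $r_1(P_2)$, so $h$ restricts in the plane to the composition of rotations by $\theta_2$ and $-\theta_2$ about the two centers $r_1(P_2)$ and $P_2$. This composition is the identity precisely when $r_1(P_2)=P_2$, that is, when $P_2$ is fixed by the nontrivial rotation $r_1$, which forces $P_2=P_1$. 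Since $\LLL_1\neq\LLL_2$ are parallel, we have $P_1\neq P_2$, so $h$ is a genuine, nonzero translation.

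Finally, since $\CCC$ is invariant under both $f_1$ and $f_2$, it is invariant under $h$; but an algebraic curve that is not a line cannot be invariant under a nontrivial translation, contradicting the hypothesis on $\CCC$. I expect the only delicate point to be the bookkeeping in the second step: one must verify carefully that the angles cancel while the centers do not coincide, which is exactly where the distinctness of the parallel axes enters. A less slick alternative, composing $f_2$ with a suitable power of $f_1$ so that the angles cancel, also yields a translation, but it leans on the rationality of $\theta_i/\pi$ and requires extra case analysis, so the commutator is the cleaner route.
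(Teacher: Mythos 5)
Your proof is correct, and it takes a genuinely different route from the paper's. The paper argues by slicing: choose a plane $\Pi$ normal to the (assumed parallel) axes and meeting $\CCC$; then the finite set $\CCC\cap\Pi$ is invariant under both induced planar rotations, and since a nontrivial rotation preserving a finite planar point set must have its center at the barycenter of the set, the two centers --- hence the two axes --- coincide. You instead work in the isometry group: the linear parts of $f_1,f_2$ commute (both are rotations about a common direction), so the commutator $h=f_1\circ f_2\circ f_1^{-1}\circ f_2^{-1}$ is a translation, and your conjugation bookkeeping ($r_1 r_2 r_1^{-1}$ is the rotation by $\theta_2$ about $r_1(P_2)$, so $h$ restricts in the plane to rotations by $\theta_2$ and $-\theta_2$ about $r_1(P_2)$ and $P_2$) correctly shows that $h$ is nontrivial exactly when $P_1\neq P_2$; invariance of $\CCC$ under a nontrivial translation then contradicts B\'ezout. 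The paper's slice-and-barycenter argument is shorter and purely synthetic; your commutator argument is more algebraic and arguably more robust, but note that it invokes the hypothesis that $\CCC$ is not a line, which appears only in the paper's standing assumptions (Section 2), not in the lemma's statement. That hypothesis is genuinely needed by any proof: the $x$-axis is invariant under the half-turns about the $z$-axis and about the vertical line through $(1,0,0)$, two distinct parallel axes, so the lemma as literally stated fails for lines --- and the paper's own proof implicitly relies on the same exclusion, since for a line $\CCC\cap\Pi$ is empty or infinite and the barycenter argument breaks down. Finally, you were right to prefer the commutator over your fallback of composing $f_2$ with powers of $f_1$ to cancel the angles: that variant needs $\theta_1$ to be a rational multiple of $2\pi$, which (by Lemma 1 of the paper) holds for nondegenerate curves but would require separate justification in general.
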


\begin{proof}
Suppose that $\LLL_1, \LLL_2$ are parallel. Let $\Pi$ be a plane normal to $\LLL_1, \LLL_2$ that intersects $\CCC$ in at least one point. The set $\CCC\cap \Pi$ is invariant under both rotations. But if a set of planar points exhibits rotation symmetry, then the rotation center must be the barycenter of the points, implying that $\LLL_1 = \LLL_2$.
\end{proof}

\begin{proposition} \label{finite-involutions}
Let $\CCC\subset \RR^3$ be a space curve different from a line or a circle. Then $\CCC$ is invariant under at most:
\begin{itemize}
\item [(i)] one central inversion;
\item [(ii)] finitely many rotation symmetries, whose axes are all concurrent;
\item [(iii)] finitely many mirror symmetries, whose planes share a point.
\end{itemize}
\end{proposition}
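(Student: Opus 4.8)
The plan is to exploit two facts repeatedly: that the composition of two symmetries of $\CCC$ is again a symmetry, and that by B\'ezout's theorem an algebraic curve can be invariant under no nontrivial translation, twist, or glide reflection. Part (i) is then immediate: if $\CCC$ admitted two central inversions with distinct centers $P_1\neq P_2$, say $f_i(\bfx)=2P_i-\bfx$, their composition $f_1\circ f_2(\bfx)=\bfx+2(P_1-P_2)$ would be a nontrivial translation leaving $\CCC$ invariant, which is impossible. For (ii) and (iii) I would first produce a common fixed point $P$ of all symmetries, and then prove that the symmetry group $G$ of $\CCC$ is finite; concurrency of the rotation axes and the fact that the mirror planes share a point both drop out once $G\subseteq O(3)_P$ is known.

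To locate $P$, I would analyze the rotation axes. By Lemma \ref{lem:ParallelRotations} no two axes are parallel. Two rotations about skew axes compose to an orientation-preserving isometry with a genuine screw component, i.e.\ a twist, which is forbidden; hence any two axes meet. Three pairwise-meeting lines in $\RR^3$ are either concurrent or coplanar, so it remains to exclude a coplanar, non-concurrent configuration: if the axes lay in a plane $\Pi$ forming a genuine triangle with a vertex $A$, then composing (or, when all rotations happen to be half-turns, conjugating) the two rotations whose axes pass through $A$ yields a rotation about a new axis through $A$ that is tilted out of $\Pi$, and this axis is skew to the third, in-plane axis, returning us to the excluded skew case. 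Thus all rotation axes are concurrent at a single point $P$. Every symmetry conjugates rotations to rotations and so permutes this pencil of axes, hence fixes its vertex $P$; a short separate check (two reflections compose to a rotation, and a reflection composed with a central inversion is a rotation) shows that even when rotations are scarce the reflections and the central inversion still fix $P$. Therefore $G\subseteq O(3)_P$.

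Finally I would prove finiteness. Since $G$ fixes $P$ it is a subgroup of the compact group $O(3)_P$, and the stabilizer of the closed set $\CCC$ inside the isometry group is itself closed, so the closure $\overline{G}$ still preserves $\CCC$. If $G$ were infinite, $\overline{G}$ would be a positive-dimensional compact subgroup of $O(3)_P$ and would contain a one-parameter group $SO(2)$ of rotations about a fixed axis through $P$; invariance of the irreducible curve $\CCC$ under all these rotations forces $\CCC$ to be a single circle, contrary to hypothesis. Hence $G$ is finite, which gives finitely many rotation symmetries with axes concurrent at $P$ and finitely many mirror symmetries with planes all containing $P$, completing (ii) and (iii); Lemma \ref{lem:angles} moreover pins down the admissible angles about each axis. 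I expect the main obstacle to be the concurrency step, in particular making rigorous that a composition of rotations about skew axes genuinely has nonzero pitch, so is a twist, and that the coplanar configuration reduces to it, together with the supporting fact that continuous rotational symmetry of an irreducible space curve forces it to be a circle.
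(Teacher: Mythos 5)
Your proof is correct, and for parts (i) and the concurrency geometry it runs parallel to the paper: the same translation argument for (i), the same exclusion of parallel axes via Lemma \ref{lem:ParallelRotations} and of skew axes via the forbidden twist, and essentially the same disposal of the coplanar triangle configuration (the paper likewise asserts, without detailed computation, that the composed rotation axis meets the plane $\Pi$ transversally and is therefore skew to the third axis --- the quaternion composition formula shows the out-of-plane component $\sin(\alpha/2)\sin(\beta/2)\,u_1\times u_2$ never vanishes for nontrivial rotations, covering also your half-turn worry, where composition alone suffices). Where you genuinely diverge is the finiteness mechanism. The paper stays elementary and quantitative: Lemma \ref{lem:angles} bounds every rotation angle to $2\pi/k$ with $k\leq\deg(\CCC)$, and an accumulation-point argument in the compact projective plane of axes through $P$, combined with the explicit angle-composition formula of Heard, produces infinitely many distinct composed angles and hence a contradiction. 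You instead pass to the closure $\overline{G}$ of the symmetry group inside the compact stabilizer $O(3)_P$, invoke Cartan's closed subgroup theorem and the classification of connected subgroups of $SO(3)$ to extract a one-parameter group $SO(2)$, and conclude that an irreducible curve with continuous rotational symmetry is a circle. Your route is cleaner and more conceptual --- it dispatches rotations, reflections, and the inversion uniformly once the common fixed point is in hand, and it would generalize beyond dimension three --- but it trades away the paper's effectivity (the explicit bound $k\leq\deg(\CCC)$ on admissible angles, which the algorithms later exploit implicitly) and imports Lie-theoretic machinery where the paper needs only Euclidean composition facts. One point to tighten if you write this up: your ``short separate check'' that reflections and the central inversion fix $P$ when there are fewer than two rotation axes needs the small case analysis the paper carries out explicitly in (iii) (all mirror planes sharing a single line, or a lone inversion), though none of these cases presents a real obstacle.
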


\begin{proof}
This result is known to hold when $\CCC$ is degenerate \cite{LRTh}, so assume that $\CCC$ is nondegenerate.

(i): If $\CCC$ is invariant under two central inversions with symmetry centers $P_1$ and $P_2$, then it is invariant under their composition, which is a translation by $2\|P_1 - P_2\|$ along the direction $P_1 P_2$ \cite[\S 7.3]{Coxeter}. Since $\CCC$ is not a line, it cannot be invariant under a nontrivial translation, implying that $P_1 = P_2$.

(ii): The composition of two rotations with axes $\LLL_1, \LLL_2$ is: (a) when the axes are parallel, a rotation with axis parallel to $\LLL_1,\LLL_2$; (b) when the axes intersect, a rotation with axis passing through $\LLL_1\cap \LLL_2$; (c) when the axes are skew, a twist. We can discard the cases (a) (by Lemma \ref{lem:ParallelRotations}) and (c).

In the remaining case (b), if $\CCC$ is invariant under three rotations with axes intersecting pairwise in three distinct points forming a plane $\Pi$, then the composition of any two rotations with axes $\LLL_1, \LLL_2$ yields a rotation with axis $\LLL$ intersecting $\Pi$ transversally in a point away from the third axis $\LLL_3$. But then the axes $\LLL$ and $\LLL_3$ are skew, which is case (c) and cannot happen.

Finally, suppose we have an infinite number of rotation axes $\{\LLL_i\}$ meeting in a point $P$. The set of lines through $P$ forms a real projective plane $\PP^2(\RR)$, which is compact. The points $\{\PPP_i\}\subset \PP^2(\RR)$ corresponding to the axes $\{\LLL_i\}$ will therefore have a point of accumulation $\PPP$. Any neighbourhood of $\PPP$ will contain an infinite number of points in $\{\PPP_i\}$, corresponding to an infinite number of axes in $\{\LLL_i\}$. These axes meet in infinitely many distinct angles. The composition of two rotations with concurrent axes is another rotation, about an axis perpendicular to the concurrent rotation axes. If the rotations have rotation angles $\alpha, \beta$ and their axes meet with an angle $\Phi$, then the composition is a rotation by an angle $\gamma$, where \cite{Heard}
\[\mbox{cos}\left(\frac{\gamma}{2}\right)=\mbox{cos}\left(\frac{\alpha}{2}\right)\cdot \mbox{cos}\left(\frac{\beta}{2}\right)-\mbox{sin}\left(\frac{\alpha}{2}\right)\cdot \mbox{sin}\left(\frac{\beta}{2}\right)\cdot \mbox{cos}(\Phi).\]
Since there are finitely many $\alpha,\beta$ by Lemma \ref{lem:angles} but infinitely many angles $\Phi$, we get infinitely many angles $\gamma$ as well, therefore contradicting Lemma \ref{lem:angles}. We conclude that $\CCC$ has at most finitely many rotation symmetries, whose axes are concurrent.

(iii): The composition of two mirror symmetries with planes $\Pi_1, \Pi_2$ is: (a) if the planes are parallel, a translation; (b) if the planes intersect, a rotation about $\Pi_1\cap \Pi_2$ making twice the angle as between $\Pi_1, \Pi_2$. Case (a) can be discarded, so all mirror symmetries of $\CCC$ have intersecting planes, and the statement follows in the case that $\CCC$ has at most two mirror symmetries.

Suppose $\CCC$ has at least three mirror symmetries $f_1, f_2, f_3$ with corresponding planes $\Pi_1, \Pi_2, \Pi_3$. If these three planes intersect in a point at infinity, then any two pairs, say $(\Pi_1, \Pi_2)$ and $(\Pi_1, \Pi_3)$, intersect in parallel lines $\LLL := \Pi_1\cap \Pi_2$ and $\LLL' := \Pi_1\cap \Pi_3$ in the finite plane, which cannot happen by Lemma \ref{lem:ParallelRotations}. It follows that the three planes intersect in a point $P:=\Pi_1\cap\Pi_2\cap\Pi_3$ in the finite plane.

Suppose there is a fourth mirror symmetry $f_4$ with plane $\Pi_4$ not containing $P$. Then $\Pi_4$ intersects $\Pi_3$ in a certain line $\LLL''$. If $\LLL''$ does not pass through $P$, then $\CCC$ has rotation symmetries about the two skew axes $\LLL$ and $\LLL''$. But then $\CCC$ would be invariant under their composition, which is a twist, and this cannot happen. So $\Pi_4$ also contains $P$ and (iii) holds.
\end{proof}

\begin{corollary} \label{invol-finite}
The number of symmetries of a plane or space curve, other than a line or a circle, is finite.
\end{corollary}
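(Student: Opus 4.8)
The plan is to reduce the statement to results already in hand. For a plane curve other than a line or a circle, finiteness of the symmetry set is exactly the fact quoted above from \cite[\S 5]{LRTh}, so nothing remains to be done in that case. I would therefore concentrate on the space case, and the idea is to collect all symmetries into a single group and bound its order using Proposition~\ref{finite-involutions}.

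First I would observe that the set $G$ of all symmetries of $\CCC$ is a group under composition. The key point is that, since $\CCC$ is neither a line nor a circle, by B\'ezout's theorem it cannot be invariant under a translation, a glide reflection, or (being a space curve) a twist; hence \emph{every} isometry leaving $\CCC$ invariant is automatically a symmetry in the sense defined above. The identity belongs to $G$, and if $f_1,f_2$ leave $\CCC$ invariant then so do $f_1\circ f_2$ and $f_1^{-1}$, so $G$ is indeed a group.

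Next I would split $G$ according to orientation. Writing each isometry as $f(\bfx)=Q\bfx+\bfb$, the assignment $f\mapsto\det(Q)\in\{\pm 1\}$ is a group homomorphism whose kernel $G^{+}$ consists of the direct symmetries; thus $G^{+}$ has index at most $2$ in $G$. The direct symmetries of a space curve are precisely the identity together with the rotation symmetries, and by part (ii) of Proposition~\ref{finite-involutions} there are only finitely many of the latter. Hence $G^{+}$ is finite, and consequently $|G|\leq 2\,|G^{+}|$ is finite as well.

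The step to be most careful about is the claim that every invariant isometry is a symmetry and that the excluded types genuinely cannot occur: this is exactly where the hypothesis that $\CCC$ is not a line or a circle, combined with B\'ezout's theorem, does the real work. Once this is secured, the opposite symmetries (reflections, rotatory reflections, and the central inversion) never have to be enumerated separately, since they are all automatically controlled by the index-$2$ bound; in particular, only part (ii) of Proposition~\ref{finite-involutions} is needed for the count, while parts (i) and (iii) give sharper structural information that is not required for mere finiteness.
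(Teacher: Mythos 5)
Your proof is correct, but it takes a different route from the paper's. The paper states the corollary without proof, as an immediate consequence of the plane-curve result quoted from \cite[\S 5]{LRTh} together with the case-by-case enumeration of Proposition~\ref{finite-involutions} (one central inversion, finitely many rotations, finitely many mirror symmetries). You instead use only part (ii) of that proposition plus a group-theoretic parity argument: the invariant isometries form a group $G$, the homomorphism $f\mapsto\det(Q)$ exhibits the direct symmetries $G^{+}$ as a subgroup of index at most $2$, and $G^{+}$ is finite because it consists of the identity and the rotation symmetries (translations and twists being excluded by B\'ezout). This buys two things. First, economy: parts (i) and (iii) become superfluous for mere finiteness, exactly as you observe. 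Second, and this is a genuine improvement: your argument automatically covers general rotatory reflections (improper rotations with angle other than $\pi$), which by the paper's own definition \emph{are} symmetries of a space curve but do not appear among cases (i)--(iii) of Proposition~\ref{finite-involutions}; read literally, the paper's derivation of the corollary leaves these unbounded, whereas in your argument every opposite symmetry is controlled by the coset bound $|G|\leq 2\,|G^{+}|$. What the paper's route buys in exchange is structural information (concurrency of the rotation axes, a common point of the mirror planes, uniqueness of the central inversion) that the parity trick does not provide. One small point you should make explicit: invariance must be read as $f(\CCC)=\CCC$ (or deduced from $f(\CCC)\subseteq\CCC$ via irreducibility and equality of degrees) so that $G$ is indeed closed under taking inverses.
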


The following theorem forms the foundation of our method. We need the definition of a \emph{M\"obius transformation} (on the affine real line), which is a rational function
\begin{equation}\label{eq:Moebius}
\varphi: \RR \dashrightarrow \RR,\qquad \varphi(t) = \frac{a t + b}{c t + d},\qquad \Delta := ad - bc\neq 0.
\end{equation}
In particular the identity map is a M\"obius transformation, which we refer to as the \emph{trivial} transformation.

\begin{theorem} \label{fundam-result}
The curve $\CCC$ in \eqref{eq:C} is invariant under a nontrivial symmetry $f$ of the form \eqref{eq:isometry} if and only if there exists a nontrivial M\"obius transformation $\varphi$, with real coefficients $a,b,c,d$, such that
\begin{equation}\label{eq:fundam}
f\big(\bfx(t)\big) = \bfx\big(\varphi(t)\big).
\end{equation}
Moreover, for any $f$ there is a unique M\"obius transformation $\varphi$ satisfying \eqref{eq:fundam}.
\end{theorem}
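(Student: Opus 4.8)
The plan is to read a symmetry of $\CCC$ as a second proper parametrization, and then invoke the fact quoted from \cite{SWPD} (and \cite{A12} in the space case) that any two proper parametrizations of the same rational curve are related by a unique M\"obius transformation, defined over the same field as the parametrizations.

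\textbf{Necessity.} Assume $\CCC$ is invariant under a nontrivial symmetry $f(\bfx)=Q\bfx+\bfb$ as in \eqref{eq:isometry}. Since $f$ is an affine isometry with real coefficients, it is a bijection of $\RR^n$, so $f\circ\bfx$ is again a rational map $\RR\dashrightarrow\RR^n$ with real coefficients, whose image is $f(\CCC)=\CCC$. As $\bfx$ is injective off a finite set and $f$ is injective everywhere, $f\circ\bfx$ is generically injective, hence proper. Thus $f\circ\bfx$ and $\bfx$ are two proper real parametrizations of $\CCC$, and the quoted result yields a real M\"obius transformation $\varphi$ of the form \eqref{eq:Moebius} with $f(\bfx(t))=\bfx(\varphi(t))$, which is exactly \eqref{eq:fundam}. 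I then argue that $\varphi$ is nontrivial: were $\varphi=\mathrm{id}$, equation \eqref{eq:fundam} would force $f$ to fix $\CCC$ pointwise, and since $\CCC$ is not a line it affinely spans its ambient space, so an isometry fixing it would be the identity, contradicting that $f$ is nontrivial.

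\textbf{Sufficiency and uniqueness.} Conversely, suppose a nontrivial real M\"obius transformation $\varphi$ satisfies \eqref{eq:fundam}. Then $f(\CCC)=\overline{\{\bfx(\varphi(t))\}}$; since $\varphi$ is dominant, its image omits at most one value of the parameter, so this set agrees with $\CCC$ up to finitely many points, and passing to the Zariski closure gives $f(\CCC)=\CCC$, i.e.\ $\CCC$ is $f$-invariant. If $f$ were trivial, then \eqref{eq:fundam} would read $\bfx(t)=\bfx(\varphi(t))$, and properness (generic injectivity) of $\bfx$ would give $\varphi(t)=t$ for infinitely many $t$, forcing $\varphi=\mathrm{id}$ and contradicting nontriviality. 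The same mechanism gives uniqueness: if $\varphi_1,\varphi_2$ both satisfy \eqref{eq:fundam}, then $\bfx(\varphi_1(t))=\bfx(\varphi_2(t))$, so $\varphi_1$ and $\varphi_2$ agree at infinitely many parameters and therefore coincide as M\"obius transformations.

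\textbf{Main obstacle.} The two substantive steps feeding the cited theorem are verifying that $f\circ\bfx$ is again \emph{proper} and that it is defined over $\RR$, so that the relating M\"obius transformation may be taken with \emph{real} coefficients $a,b,c,d$ rather than merely complex ones; both follow because $f$ is a real bijective affine map, but they are the crux on which the whole correspondence rests. Everything else is bookkeeping with properness, repeatedly upgrading ``two rational maps agree at infinitely many parameters'' to ``they are equal.'' The one delicate point in the nontriviality equivalence concerns \emph{degenerate} space curves, where $f$ could be the reflection in the plane containing $\CCC$: this fixes $\CCC$ pointwise yet is a nontrivial isometry, so it must be set aside or handled within the planar framework, consistent with the reduction of degenerate space curves to the plane case.
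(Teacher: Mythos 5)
Your proposal is correct and follows essentially the same route as the paper's proof: both read $f\circ\bfx$ as a second proper parametrization of $\CCC$ and reduce to the fact from \cite{SWPD} that such parametrizations are related by a M\"obius transformation (real since it maps the real line to itself), with sufficiency and uniqueness handled exactly as you do, via properness of $\bfx$. If anything, you are more careful than the paper, whose proof asserts that ``$\varphi$ is nontrivial because $f$ is nontrivial'' without your affine-spanning argument; your closing caveat---that a degenerate space curve is fixed pointwise by the reflection in its plane, a nontrivial $f$ paired with the trivial $\varphi$---identifies a genuine subtlety that the paper's proof silently glosses over.
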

\begin{proof} If there are two M\"obius transformations $\varphi_1, \varphi_2$ satisfying \eqref{eq:fundam}, then $\bfx(\varphi_1) = \bfx(\varphi_2)$. Since $\bfx$ is proper, it follows that $\varphi_1 = \varphi_2$. For the first claim:

``$\Longrightarrow$'': Let $\tilde\bfx(t) := f\big(\bfx(t)\big)$. Since $\bfx$ is proper, $\bfx^{-1}$ is defined for all but finitely many values, and $\bfx^{-1} \circ \tilde\bfx$ is a birational map $\varphi := \tilde\bfx^{-1} \circ \bfx$ from the real line to itself. Any such map lifts to a birational automorphism of the complex projective line $\PP(\CC)$, which are known to be M\"obius transformations \cite{SWPD}. It follows that $\varphi$ takes the form \eqref{eq:Moebius} and is nontrivial because $f$ is nontrivial. Moreover, since $\varphi$ maps the real line to itself, we can assume that the coefficients of $\varphi(t)$ are real.

``$\Longleftarrow$'': If $f\big(\bfx(t)\big) =\bfx\big(\varphi(t)\big)$ for some nontrivial isometry $f$ and nontrivial M\"obius transformation $\varphi$, we observe that $\tilde\bfx(t) := f\big(\bfx(t)\big)$ is an alternative parametrization of $\CCC$, and therefore that $\CCC$ is invariant under $f$.
\end{proof}

Equation \eqref{eq:fundam} relates the symmetries of $\CCC$ to M\"obius transformations in the parameter domain. We obtain the following lemma.

\begin{lemma}\label{lem:order}
Suppose an isometry $f$ and M\"obius transformation $\varphi$ are related by \eqref{eq:fundam}. For any integer $k$, the composition $f^k = \textup{id}_{\RR^n}$ if and only if $\varphi^k = \textup{id}_{\RR}$.
\end{lemma}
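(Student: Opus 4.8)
The plan is to first upgrade \eqref{eq:fundam} to its iterated form, namely the identity $f^k\big(\bfx(t)\big) = \bfx\big(\varphi^k(t)\big)$ of rational maps, valid for every integer $k$; both implications then drop out quickly. I would prove this by induction on $k$. The case $k=0$ is trivial and $k=1$ is \eqref{eq:fundam} itself. For the inductive step I would apply $f$ to both sides of $f^k\big(\bfx(t)\big) = \bfx\big(\varphi^k(t)\big)$ and then use \eqref{eq:fundam} once more, now evaluated at the parameter $\varphi^k(t)$, to rewrite $f\big(\bfx(\varphi^k(t))\big)$ as $\bfx\big(\varphi(\varphi^k(t))\big) = \bfx\big(\varphi^{k+1}(t)\big)$. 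To cover negative exponents I would first substitute $\varphi^{-1}(t)$ for $t$ in \eqref{eq:fundam} and apply $f^{-1}$, obtaining $f^{-1}\big(\bfx(t)\big) = \bfx\big(\varphi^{-1}(t)\big)$, and then run the same induction downward. These steps are legitimate because $f$ is a bijection and $\varphi$ is invertible as $\Delta\neq 0$; the composite $\varphi^k$ is again a M\"obius transformation.

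For the forward implication, suppose $f^k = \textup{id}_{\RR^n}$. Substituting into the iterated relation gives $\bfx\big(\varphi^k(t)\big) = \bfx(t)$ for all $t$ in the domain. Since $\bfx$ is proper, it is injective outside a finite set, so this equality forces $\varphi^k(t) = t$ for all but finitely many $t$. A M\"obius transformation whose fixed-point equation, a polynomial of degree at most two, has infinitely many roots must be the identity, hence $\varphi^k = \textup{id}_{\RR}$.

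For the converse, suppose $\varphi^k = \textup{id}_{\RR}$. Then the iterated relation yields $f^k\big(\bfx(t)\big) = \bfx(t)$ for every $t$, so the isometry $f^k$ fixes every point of $\CCC$. The fixed-point set of an isometry is an affine subspace, so it contains the affine hull of $\CCC$. Since $\CCC$ is not contained in any hyperplane---being a plane curve other than a line when $n=2$, or a nondegenerate curve when $n=3$---its affine hull is all of $\RR^n$, and in particular $\CCC$ contains $n+1$ affinely independent points. An isometry fixing $n+1$ affinely independent points is the identity, so $f^k = \textup{id}_{\RR^n}$. The case of negative $k$ reduces to the positive one, since $f^k = \textup{id}_{\RR^n}$ if and only if $f^{-k} = \textup{id}_{\RR^n}$, with the analogous equivalence for $\varphi$.

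The step needing the most care is the converse, where fixing $\CCC$ pointwise must genuinely pin $f^k$ down to the identity; this is exactly the role of the hypothesis that $\CCC$ spans $\RR^n$ affinely. Were $\CCC$ confined to a hyperplane---for instance a planar curve in $\RR^3$---a nontrivial reflection across that hyperplane would fix $\CCC$ pointwise while corresponding to the trivial M\"obius transformation, and the converse would fail; this is precisely why degenerate space curves are treated separately, as plane curves. Everything else is a formal consequence of the iterated relation together with the properness of $\bfx$.
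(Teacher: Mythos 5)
Your proof is correct, and its core mechanism is the same as the paper's, though considerably unpacked. The paper's proof is a one-liner: since $\bfx$ is proper it has a rational inverse, so $\varphi = \bfx^{-1}\circ f\circ \bfx$, hence $\varphi^k = \bfx^{-1}\circ f^k\circ \bfx$, from which both directions are read off. Your inductively established identity $f^k\big(\bfx(t)\big) = \bfx\big(\varphi^k(t)\big)$ is exactly this conjugation relation, proved by hand without invoking $\bfx^{-1}$. What you add is a careful treatment of the two implications that the paper compresses into ``the result follows'': for one direction, properness of $\bfx$ plus the fact that the fixed-point equation of a M\"obius transformation is a polynomial of degree at most two; for the other, the fixed-point set of an isometry being an affine subspace that must contain the affine hull of $\CCC$. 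Your closing caveat is substantive rather than cosmetic: the converse genuinely requires $\CCC$ to affinely span $\RR^n$, and for a degenerate space curve the lemma fails verbatim --- a reflection in the plane of the curve fixes $\CCC$ pointwise and pairs with the trivial M\"obius transformation, giving $\varphi = \textup{id}_{\RR}$ while $f \neq \textup{id}_{\RR^3}$. The paper's one-line proof silently elides this step, since $\bfx^{-1}\circ f^k\circ \bfx = \textup{id}_{\RR}$ only yields that $f^k$ fixes $\CCC$ pointwise, and the paper's standing hypotheses (``neither a line nor a circle'') do not exclude degenerate space curves; so your observation pinpoints an implicit assumption in the source as well, which is a genuine strength of your write-up.
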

\begin{proof}
Because $\bfx$ is proper, its inverse $\bfx^{-1}$ exists as a rational map, and $\varphi=\bfx^{-1} \circ f \circ \bfx$. The result follows from $\varphi^k = (\bfx^{-1} \circ f \circ \bfx)^k = \bfx^{-1} \circ f^k \circ \bfx$.
\end{proof}

At this point one could in principle find the symmetries of $\CCC$ by determining $f$ and $\varphi$ satisfying the equation \eqref{eq:fundam}. However, the resulting polynomial system would involve too many variables in the coefficients of $\varphi$ and $f$ to be solved efficiently. In the following section we propose an efficient method to determine the symmetries of $\CCC$.

\section{Determining symmetries of plane and space curves} \label{sec:determinesym}
\noindent To consider plane and space curves in one go, we embed $\RR^2$ into $\RR^3$ as the plane of points with zero third component. The mappings on $\RR^2$ are lifted to mappings of $\RR^3$ leaving the third component invariant.

For technical reasons, we assume that $\bfx(0)$ is well defined, and that $\bfx'(0),\bfx''(0)$ are also well defined, nonzero, and not parallel. Notice that this amounts to requiring that the \emph{curvature}
\[
\kappa(t) := \frac{\|\bfx'(t) \times \bfx''(t)\|}{\|\bfx'(t)\|^3}
\]
is well defined and nonzero at $t = 0$.
Since this is the case for almost all parameters $t$, this condition holds after applying an appropriate, even random, linear affine change of the parameter $t$.

Assume that the plane or space curve $\CCC$ in \eqref{eq:C} is invariant under a nontrivial symmetry $f(\bfx) = Q\bfx + \bfb$. By Theorem~\ref{fundam-result}, there is a M\"obius transformation~$\varphi$ satisfying \eqref{eq:fundam}. Our strategy will be to first, in Sections \ref{sec:dzero}--\ref{sec:spacerotations}, express all unknown parameters in $Q, \bfb$, and $\varphi$ as rational functions of a single parameter $b$ of $\varphi$. Substituting these rational functions into \eqref{eq:fundam} and clearing denominators, one obtains three polynomials in $t$, whose coefficients are polynomials in $b$. For \eqref{eq:fundam} to hold identically for all $t$, each of these polynomial coefficients must be zero, which happens if and only if their greatest common divisor vanishes. Removing from this polynomial all factors for which the M\"obius transformation or symmetry is not defined or not invertible, one obtains a polynomial $P(b)$ in which every real root corresponds to a symmetry.

Let $f$ be a plane rotation or an involution in $\RR^2$ or $\RR^3$. Alternatively, let $f$ be any isometry and $\CCC$ be a Pythagorean-Hodograph curve. We can now formulate the main theorem of the paper, which will be proved case-by-case in Sections \ref{sec:dzero}--\ref{sec:spacerotations}.

\begin{theorem} \label{th-alg}
The curve $\CCC$ has a nontrivial symmetry $f$ if and only if $P(b)$ has a real root $b$ at which the parameters of $Q, \bfb$, and $\varphi$ are well defined.
\end{theorem}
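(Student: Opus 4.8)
The plan is to establish the theorem as a direct consequence of the setup described in the paragraph preceding it, reading Theorem~\ref{fundam-result} through the construction of $P(b)$. The key conceptual point is that the construction of $P(b)$ is engineered precisely so that its real roots are in bijection with the nontrivial symmetries of $\CCC$, and the proof must make this bijection explicit and verify it in both directions. Since the statement is to be proved case-by-case in the later sections, here I would give only the unifying argument that reduces each case to the common mechanism, deferring the case-specific parametrizations of $Q$, $\bfb$, and $\varphi$ in terms of the single parameter $b$ to those sections.

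First I would set up the forward implication. Assume $\CCC$ has a nontrivial symmetry $f(\bfx) = Q\bfx + \bfb$. By Theorem~\ref{fundam-result}, there is a unique nontrivial M\"obius transformation $\varphi$ with real coefficients satisfying $f(\bfx(t)) = \bfx(\varphi(t))$ identically in $t$. The case-by-case analysis of Sections \ref{sec:dzero}--\ref{sec:spacerotations} shows that in each admissible situation the remaining unknowns in $Q$, $\bfb$, and $\varphi$ are rational functions of a single coefficient $b$ of $\varphi$, so the existence of $f$ fixes a value of $b$. Substituting these rational expressions into \eqref{eq:fundam} and clearing denominators yields, for each coordinate, a polynomial identity in $t$ whose coefficients are polynomials in $b$; because \eqref{eq:fundam} holds identically, every such coefficient vanishes at this value of $b$, hence so does their gcd, and the extraneous factors removed in forming $P(b)$ are exactly those at which $\varphi$ or $f$ degenerates. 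Therefore $b$ is a real root of $P(b)$ at which the parameters of $Q$, $\bfb$, and $\varphi$ are well defined.

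For the converse, I would start from a real root $b_0$ of $P(b)$ at which the parameters are well defined, use the case-specific rational formulas to reconstruct a candidate $Q$, $\bfb$, and $\varphi$, and show that this data satisfies \eqref{eq:fundam}. The vanishing of $P(b_0)$ forces the gcd of the coefficient polynomials to vanish, which (after the removal of extraneous factors, guaranteeing that $\varphi$ is a genuine M\"obius transformation and $f$ a genuine isometry) makes $f(\bfx(t)) = \bfx(\varphi(t))$ hold identically in $t$. One must check that the reconstructed $Q$ is orthogonal and that $\varphi$ is nontrivial; these are precisely the conditions encoded by the well-definedness clause and by the removal of the trivial-transformation factor. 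The $\Longleftarrow$ direction of Theorem~\ref{fundam-result} then yields that $\CCC$ is invariant under $f$, completing the equivalence.

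The main obstacle I anticipate is bookkeeping rather than depth: ensuring that the passage from ``all coefficient polynomials vanish'' to ``the gcd vanishes'' is genuinely an equivalence and not merely an implication, and that the factors stripped off to form $P(b)$ account for \emph{every} value of $b$ producing a degenerate $\varphi$ (e.g.\ $\Delta = ad-bc = 0$) or a non-isometry $f$ (e.g.\ loss of orthogonality of $Q$), with no spurious roots surviving and no genuine symmetry discarded. Because this verification depends on the explicit rational parametrizations, the cleanest structure is to prove the general reduction here and certify, in each of Sections \ref{sec:dzero}--\ref{sec:spacerotations}, that the extraneous-factor removal is exact for that case. The appeal to properness of $\bfx$, via the uniqueness in Theorem~\ref{fundam-result} and Lemma~\ref{lem:order}, guarantees that distinct symmetries correspond to distinct M\"obius transformations and hence to distinct roots, so no multiplicity issues obscure the correspondence.
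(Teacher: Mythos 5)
Your proposal is correct and takes essentially the same route as the paper: the paper provides no self-contained proof of Theorem~\ref{th-alg}, but instead proves it through the construction paragraph preceding the statement together with the case-by-case derivations of Sections~\ref{sec:dzero}--\ref{sec:spacerotations}, which is precisely the two-directional reduction you describe (Theorem~\ref{fundam-result} plus properness for the forward direction and for uniqueness, expression of all parameters of $Q$, $\bfb$, $\varphi$ as rational functions of $b$, vanishing of the coefficient gcd, removal of degenerate factors, and the ``$\Longleftarrow$'' direction of Theorem~\ref{fundam-result} for the converse). If anything, you are more explicit than the paper about the converse-direction verification obligations, such as checking that the reconstructed $Q$ is orthogonal and that the extraneous-factor removal is exact, which the paper leaves implicit in the case analyses.
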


Each real root of $P(b)$ determines a M\"obius transformation, which corresponds uniquely to a symmetry of $\CCC$ by Theorem \ref{fundam-result}. By Corollary \ref{invol-finite}, $\CCC$ has at most finitely many symmetries, implying that $P(b)$ cannot be identically zero.

Finally, observe that one can directly find the symmetry type and its elements by analyzing the set of fixed points of the symmetry $f(\bfx) = Q(b) \bfx + \bfb(b)$. In particular, $\rank\big(Q(b) - I\big)$ is 1 for a symmetry with respect to a plane; 2 for a symmetry with respect to a line; 3 for a rotation symmetry or central inversion.

\subsection{The case $d = 0$}\label{sec:dzero}
\noindent If $d = 0$, equation \eqref{eq:fundam} becomes
\[ Q\bfx(t) + \bfb = \bfx\big(\varphi(t)\big) = \bfx\big(\ta / t + \tb \big), \]
where $\ta := b/c$ and $\tb := a/c$. Applying the change of variables $t\longrightarrow 1/t$ and writing $\tbfx(t) := \bfx(1/t)$, we obtain
\begin{equation} \label{spec}
 Q\tbfx(t) + \bfb =  \bfx\big(\ta t + \tb\big).
\end{equation}
Without loss of generality, we assume that $\tbfx(t)$ is well defined at $t=0$ and that $\tbfx'(0),\tbfx''(0)$ are well defined, nonzero, and not parallel. Evaluating \eqref{spec} at $t = 0$ yields
\begin{equation}\label{eq:evaldzero}
Q\tbfx(0) + \bfb = \bfx(\tb),
\end{equation}
while differentiating once and twice and evaluating at $t=0$ yields
\begin{equation}\label{eq:diff12dzero}
Q\tilde{\bfx}'(0) =  \bfx'(\tb)\cdot \ta,\qquad Q\tilde{\bfx}''(0) =  \bfx''(\tb)\cdot \ta^2.
\end{equation}

Taking inner products and using that $Q$ is orthogonal, we get
\begin{equation}\label{eq:a234}
\ta^2 = \frac{ \|\tilde{\bfx}'(0)\|^2 }{\|\bfx'(\tb)\|^2},\qquad
\ta^3 = \frac{ \langle \tilde{\bfx}'(0), \tilde{\bfx}''(0)\rangle }{\langle \bfx'(\tb), \bfx''(\tb)\rangle},
\end{equation}
from which we can write $\ta$ as a rational function of $\tb$,
\begin{equation}\label{spec5}
\ta = \frac{\Vert \bfx'(\tb)\Vert^2}{\Vert \tilde{\bfx}'(0)\Vert^2}\cdot
\frac{\langle\tilde{\bfx}'(0),\tilde{\bfx}''(0)\rangle}{\langle\bfx'(\tb),\bfx''(\tb)\rangle}.
\end{equation}
A straightforward, but lengthy, calculation yields
\begin{equation}\label{eq:cross}
(M\bfa) \times (M\bfb) = \det(M) M^{-T} (\bfa \times \bfb)
\end{equation}
for any invertible matrix $M\in \RR^{3\times 3}$ and vectors $\bfa, \bfb\in \RR^3$. Taking the cross product in \eqref{eq:diff12dzero} and using that $Q$ is orthogonal, one obtains
\begin{equation}\label{eq:diff1xdiff2dzero}
Q\big( \tbfx'(0)\times \tbfx''(0) \big) = \det(Q) \big( \bfx'(\tb) \times \bfx''(\tb) \big) \cdot \ta^3.
\end{equation}

We analyze separately the cases $n = 2$ and $n = 3$. For $n = 2$, \eqref{eq:diff12dzero} implies that multiplying $Q$ by the matrix $A := [\tbfx'(0), \tbfx''(0)]$ gives the matrix $B := [\bfx'(\tb) \ta, \bfx''(\tb) \ta^2]$ so that $ Q = BA^{-1}$. For $n = 3$, multiplying $Q$ by the matrix $A := [\tbfx'(0), \tbfx''(0), \tbfx'(0) \times \tbfx''(0)]$ gives the matrix
\[ B := \big[\bfx'(\tb) \ta, \bfx''(\tb) \ta^2, \det(Q) \big( \bfx'(\tb) \times \bfx''(\tb) \big) \cdot \ta^3\big] \]
and $Q = BA^{-1}$. One sets $\det(Q) = 1$ to find the direct transformations and $\det(Q) = -1$ to find the opposite transformations. Substituting $\ta = \ta(\tb)$, one expresses $Q$ as a matrix-valued rational function of $\tb$. Finally from \eqref{eq:evaldzero} one expresses $\bfb$ as a vector-valued rational function of $\tb$.

\subsection{The case $d \neq 0$}\label{sec:dnonzero}
\noindent If $d\neq 0$, we may and will assume $d = 1$ after scaling the coefficients of $\varphi$ if necessary. Differentiating \eqref{eq:fundam} twice, we get
\begin{align}
Q\bfx'(t)  & = \bfx'\big(\varphi(t)\big)\cdot\varphi'(t) = \bfx'\left(\frac{a t + b}{c t + d}\right)\frac{\Delta}{(c t + d)^2}, \label{eq:first}\\
Q\bfx''(t) & = \bfx''\big(\varphi(t)\big)\big(\varphi'(t)\big)^2 + \bfx'\big(\varphi(t)\big)\varphi''(t) \label{eq:second}\\
           & =\displaystyle{\bfx''\left(\frac{a t + b }{c t + d }\right) \frac{\Delta^2}{(c t + d )^4} - 2\bfx'\left(\frac{a t + b}{c t + d}\right) \frac{c \Delta}{(c t + d)^3}}. \notag
\end{align}
Evaluating \eqref{eq:first} and \eqref{eq:second} at $t = 0$ yields
\begin{align}
Q \bfx'(0)  & = \bfx'(b) \Delta, \label{eq:diff1}\\
Q \bfx''(0) & = \bfx''(b) \Delta^2 - 2 \bfx'(b) c\Delta. \label{eq:diff2}
\end{align}
From \eqref{eq:diff1}, and using that $Q$ is orthogonal, we can express
\begin{equation}\label{eq:deltasquared} \Delta^2 = \frac{\|\bfx'(0) \|^2}{\|\bfx'(b) \|^2}\end{equation}
solely in terms of $b$. Taking the cross product of \eqref{eq:diff1}, \eqref{eq:diff2} and using again \eqref{eq:cross} and that $Q$ is orthogonal, one obtains
\begin{equation}\label{eq:diff1xdiff2}
Q \big(\bfx'(0) \times \bfx''(0) \big)  = \det(Q)\cdot \Delta^3 \big(\bfx'(b) \times \bfx''(b) \big).
\end{equation}
Using \eqref{eq:deltasquared} and taking norms, one reaches
\begin{equation}\label{cond}
 \|\bfx'(0) \times \bfx''(0)\|^2 \|\bfx'(b) \|^6 - \|\bfx'(0) \|^6 \|\bfx'(b) \times \bfx''(b)\|^2 = 0,
\end{equation}
which amounts to $\kappa^2(b) = \kappa^2(0)$. If this equation does not have a real root, then we know that $\CCC$ does not have any symmetry of any type. If it does, then we proceed to write $Q$ in terms of $b$. By computing the dot product of \eqref{eq:diff1}, \eqref{eq:diff2} and using that $Q$ is orthogonal, we get
\begin{equation}\label{quat-4}
c = - \frac{\langle \bfx''(0), \bfx'(0) \rangle}{2\|\bfx'(0)\|^2} + \Delta \frac{\langle \bfx''(b), \bfx'(b)\rangle}{2\|\bfx'(b)\|^2}.
\end{equation}

Next we consider separately involutions and plane rotations for which the corresponding M\"obius transformation has parameter $d = 1$.

\subsection{Involutions} \label{sec:involutions}
\noindent Assume that $f(\bfx) = Q\bfx + \bfb$ is a nontrivial involution. Then
\[ \bfx = f^2(\bfx) = Q(Q\bfx + \bfb) + \bfb = Q^2\bfx + (Q+I)\bfb,\qquad \bfx\in \RR^n, \]
implying that $Q^2=I$. By Lemma \ref{lem:order}, $\varphi^2 = \textup{id}_{\RR}$, implying that $(a+d)b = 0$, $(a+d)c = 0$, and $a^2 = d^2$. If $a + d\neq 0$, then $b = c = 0$ and $a = d$, and therefore $\varphi=\mbox{id}_{{\Bbb R}}$, which contradicts that $f(\bfx)$ is nontrivial. Therefore $a = -d$. So, $\Delta = -1 -bc$, and from \eqref{quat-4} we can write
\begin{equation}\label{eq:gammaofb}
c = c(b) = - \frac{\langle \bfx'(b), \bfx''(b) \rangle \|\bfx'(0) \|^2 + \langle \bfx'(0), \bfx''(0) \rangle \|\bfx'(b) \|^2}{\|\bfx'(0) \|^2 \Big( b\langle \bfx'(b), \bfx''(b) \rangle + 2 \|\bfx'(b) \|^2 \Big)}.
\end{equation}
as a rational function of $b$.

By changing the parametrization if necessary, we can determine $c(b)$ by assuming that the numerator and denominator of the above fraction have no real root in common. Alternatively, we can take the gcd of the numerator and denominator and find the common real roots $b$, determine the corresponding $\Delta(b)$ from \eqref{eq:deltasquared} by considering both signs separately, and $c(b)$ from $\Delta(b) = -1 - b\cdot c(b)$. Moreover, the denominator of this expression vanishes iff
\[
0 = b\langle \bfx'(b), \bfx''(b) \rangle + 2 \|\bfx'(b) \|^2  = \frac{b}{2} \frac{\ud}{\ud b} \|\bfx'(b)\|^2 + 2 \|\bfx'(b) \|^2,
\]
which happens precisely when $\Vert \bfx'(b)\Vert^2 = M/b^4$, with $M$ a nonzero constant. However, in that case $\bfx'(0)$ is not defined, which contradicts one of our initial assumptions. Hence, the above expression for $c(b)$ is well defined.

Once the rational functions $\Delta = \Delta(b)$ and $c = c(b)$ are obtained, the matrix $Q = Q(b)$ can again be determined from its action on $\bfx'(0), \bfx''(0)$, and $\bfx'(0) \times \bfx''(0)$, which is given by equations \eqref{eq:diff1}, \eqref{eq:diff2}, and \eqref{eq:diff1xdiff2}. One finds $\bfb(b)$ from evaluating \eqref{eq:fundam} at $t = 0$.

\subsection{Plane rotations} \label{sec:planerotations}
\noindent In order to detect rotation symmetries in the plane, we identify the Euclidean plane with the complex plane as $(x_1, x_2) \simeq x_1 + x_2 \bfi$. Thus the parametrization $\bfx = (x_1, x_2)$ in \eqref{eq:C} yields a parametrization
\[ z: \RR\dashrightarrow \CCC\subset \CC,\qquad z(t) = x_1(t) + x_2(t)\bfi, \]
where we denoted the curve by the same symbol $\CCC$. Writing $z_0$ for the rotation center and $\theta$ for the rotation angle, equation \eqref{eq:fundam} takes the form
\begin{equation} \label{rot-1}
z\big(\varphi(t)\big) = z_0 + e^{{\bf i}\theta}\cdot \big(z(t) - z_0\big).
\end{equation}
Differentiating this expression we obtain
\begin{equation} \label{rot-diff}
z'\big(\varphi(t)\big)\cdot \frac{\Delta}{(c t + d)^2}=e^{{\bf i}\theta}\cdot z'(t).
\end{equation}

Without loss of generality, we assume that $z(t)$ is well defined at $t = 0$ and $z'(0)\neq 0$, so that evaluating \eqref{rot-1} and \eqref{rot-diff} at $t = 0$ gives
\begin{equation}\label{eq:rot-1tzero}
e^{{\bf i}\theta} = \Delta \frac{z'(b)}{z'(0)},\qquad
z_0 = \frac{z(b) - e^{\theta \bfi} z(0)}{1 - e^{\theta \bfi}},
\end{equation}
expressing the symmetry in terms of the M\"obius transformation. Differentiating \eqref{rot-diff}, evaluating at $t=0$, and solving for $c$, we deduce
\begin{equation} \label{rot-5}
c = \frac{1}{2}\left[\frac{z''(b)}{z'(b)}\Delta - \frac{z''(0)}{z'(0)} \right].
\end{equation}
Since all coefficients of $\varphi$ are real, the imaginary part of the above expression for $c$ must be zero, which yields rational expressions $\Delta = \Delta(b), c = c(b)$, and therefore also $a = a(b) = \Delta(b) + b\cdot c(b)$. The symmetry itself is determined from \eqref{eq:rot-1tzero}.

\subsection{Space rotations and Pythagorean-Hodograph curves}\label{sec:spacerotations}
\noindent Some rotation symmetries of space curves are found by the previous algorithms. Since axial symmetries are involutions, these rotations will be found directly by the method of Section \ref{sec:involutions}. By the Cartan-Dieudonn\'e Theorem, any rotation $f$ in $\RR^3$ is the composition of three reflections $f_1,f_2,f_3$. However, if our curve $\CCC$ has a rotation symmetry $f$, then these reflections $f_1,f_2,f_3$ need not be mirror symmetries of our curve. Taking compositions of the reflections found in Section \ref{sec:involutions} will therefore only yield some of the rotations of $\CCC$. In addition, the rotations whose corresponding M\"obius transformation have parameter $d=0$ will be found by the method of Section \ref{sec:dzero}.

Unfortunately it seems that the approach of the previous section can not be generalized to find all rotations of space curves. The complex numbers in the plane can be replaced by quaternions \cite{Farouki, Goldman} in space, which provides a convenient way to express rotations. For instance, one can give quaternion versions of \eqref{eq:first} and \eqref{eq:second}. The difficulty, however, comes from the fact that quaternions are not commutative, which makes it hard to eliminate the parameters defining the rotation in the resulting equations. Because of this, we have not been able to prove a version of Theorem~\ref{th-alg} for space rotations. In fact, we are uncertain whether it is possible in that case to write all parameters of the M\"obius transformation as rational functions of just one of them. While we might write these parameters in terms of two of them, which would yield a bivariate polynomial system, we feel that this solution is not satisfactory computationally. We therefore pose the question here as a pending problem.

However, if the curve $\CCC$ from \eqref{eq:C} is a \emph{Pythagorean-Hodograph curve}, i.e., if there exists a rational function $\sigma(t)$ such that
\[\Vert\bfx'(t)\Vert^2=x_1'^2(t)+x_2'^2(t)+\cdots +x_n'^2(t)=\sigma^2(t),\]
then we do not run into the same obstacle. Such curves form an important topic in Computer Aided Geometric Design, and they have been studied extensively both from the point of view of theory and of applications \cite{Farouki}. In this case we can determine all rotation symmetries, since \eqref{eq:deltasquared} gives a rational function $\Delta(b) = \pm \|\bfx'(0)\|/\sigma(b)$, from which we find a rational function $c(b)$ by \eqref{quat-4}, and finally a rational function $a(b) = \Delta(b) + b\cdot c(b)$. After this we determine the symmetry as before. The sign of $\Delta(b)$ is not easily determined in advance, so it is necessary to carry out the algorithm for both cases.

\section{Implementation and experimentation} \label{sec:implementation}
\noindent For all but the simplest examples, the computations quickly become too large to be carried out by hand, and a computer algebra system is needed. We have therefore implemented and tested the algorithms in \Sage~\cite{sage}. The resulting worksheet with implementations and examples can be downloaded from the website of the third author \cite{WebsiteGeorg}.

\begin{figure}
\centering
\begin{subfigure}[b]{0.45\textwidth}
  \centering
  \includegraphics[scale=0.7]{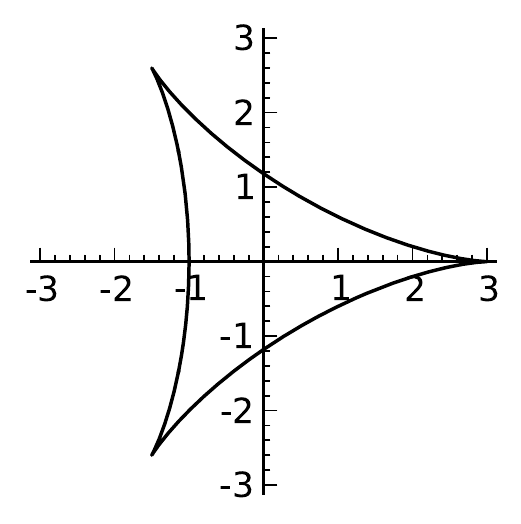}
  \caption{ }\label{fig:Deltoid}
\end{subfigure}%
\qquad
\begin{subfigure}[b]{0.45\textwidth}
  \centering
  \includegraphics[scale=0.1]{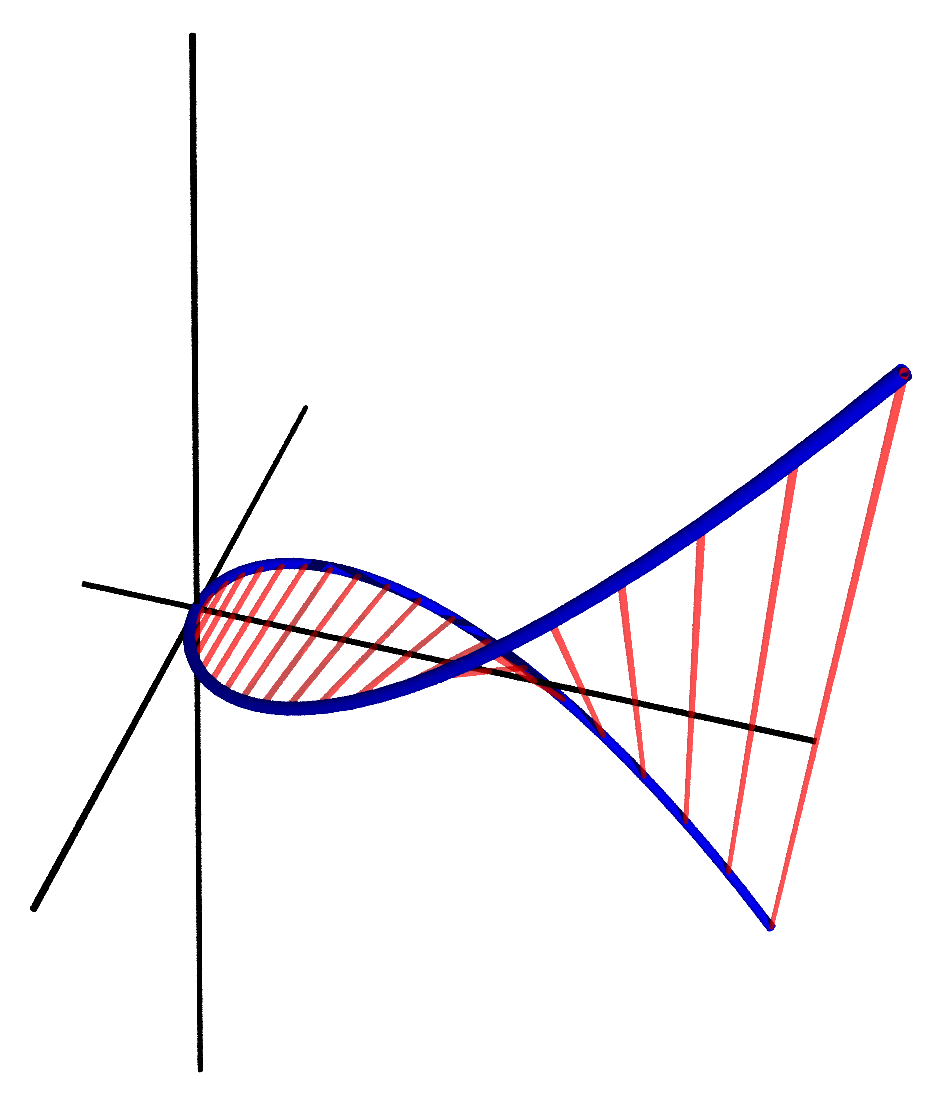}
  \caption{ }\label{fig:TwistedCubic}
\end{subfigure}%
\caption{The deltoid from \eqref{eq:deltoid} (left) and the twisted cubic from \eqref{eq:TwistedCubic} (right).}\label{fig:Curves}
\end{figure}

\subsection{An example: rotations of the deltoid}\label{sec:deltoid}
\noindent Let $\CCC$ be the \emph{deltoid} from Figure \ref{fig:Deltoid}, defined parametrically as the image of the map $z: \RR\longrightarrow \CC$,
\begin{equation}\label{eq:deltoid}
z(t) = \frac{-t^4 + 4t^3 - 12t^2 + 16t - 4}{t^4 - 4t^3 + 8t^2 - 8t + 4} + \frac{8t^3 - 24t^2 + 24t - 8}{t^4 - 4t^3 + 8t^2 - 8t + 4}\bfi.
\end{equation}
We follow the recipe from Section \ref{sec:planerotations} to find its rotations. This parametrization is well defined at $t=0$ and satisfies $z'(0) \neq 0$.
Using that the imaginary part of \eqref{rot-5} is zero, we find $\Delta(b) = \frac{1}{2}b^2 - b + 1$,
\[ c(b) = \frac{b(2b^3 - 3b^2 - 18b + 22)}{4(b^2 - 2b - 2)(1 - b)},\qquad
   a(b) = \frac{7b^4 - 34b^3 + 30b^2 + 8b - 8}{4(1 -b)(b^2 - 2b - 2)}. \]
The symmetry is determined by \eqref{eq:rot-1tzero}, as
\begin{equation}\label{eq:deltoidangle}
e^{\theta \bfi}(b) =
-\frac{(b - 1)(b^2 - 2b - 2)}{(b^2 - 2b + 2)^2}(b - 2 + b \bfi),
\end{equation}
\begin{equation}\label{eq:deltoidcenter}
z_0(b) = -3\frac{b^2 - 6b + 6}{(5b^4 - 32b^3 + 72b^2 - 64b + 20)} \big(b^2 - 4b + 2 + 2(b - 2)(b - 1)\bfi \big).
\end{equation}
Substituting these expressions into \eqref{rot-1} yields a rational function in $t$, whose coefficients are polynomials in $b$. This rational function is identically zero if and only if the gcd
$b(b - 1)(b^2 - 2b - 2)(b^2 - 6b + 6)$
of the coefficients in the numerator is zero. Removing all factors for which either the M\"obius transformation or the symmetry is not defined or not invertible, we obtain a polynomial $P(b) = b(b^2 - 6b + 6)$. Substituting its three real zeros $b = 0, 3\pm \sqrt{3}$ into \eqref{eq:deltoidangle}, \eqref{eq:deltoidcenter} we find rotations about $z_0 = 0$ with angles $\theta = 0, 2\pi/3, 4\pi/3$. There are no additional symmetries for the case $d = 0$.

\begin{table}
\begin{tabular*}{\columnwidth}{m{5.5em}@{\extracolsep{\stretch{1}}}*{5}{ccccccc}@{}}
\toprule
curve & deg. & parametrization $z$ & $u(t)$ & \#$_\text{rot}$ & \#$_\text{inv}$ & $t_\text{rot}$ & $t_\text{inv}$ \\
\midrule
cubic \\
\ \includegraphics[scale=0.7]{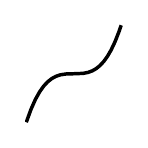} & 3 & $3u + u^3 \bfi $ & $\frac{t-1}{t+1}$ & 1 & 1 & 0.09 & 0.09\\ \midrule
folium\\
\ \includegraphics[scale=0.7]{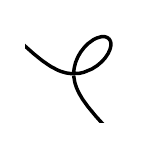} & 3 & $\frac{3u + 3u^2\bfi}{u^3 + 1} $ & $t+1$ & 0 & 1 & 0.23 & 0.29\\ \midrule
epitrochoid \\
\ \includegraphics[scale=0.7]{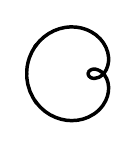} & 4 & $ \frac{1 - 7u^4 + 18u^2 - (20u^3 - 4u)\bfi}{(u^2 + 1)^2} $ & $t - 1$ & 0 & 1 & 0.12 & 0.14 \\ \midrule
3-leaf rose\\
\ \includegraphics[scale=0.7]{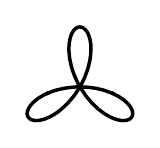} & 4 & $\frac{1 - 3u^2}{(u^2 + 1)^2} (u + \bfi)$ & $t-1$ & 2 & 3 & 0.53 & 0.82\\ \midrule
deltoid\\
\ \includegraphics[scale=0.7]{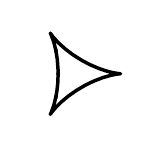} & 4 & $-\frac{u^4 + u^2 - 3 - 8u^3\bfi}{(u^2 + 1)^2}$ & $t+1$ & 2 & 3 & 0.17 & 0.47\\ \midrule
lemniscate\\
\ \includegraphics[scale=0.7]{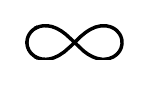} & 4 & $\frac{(1 - u^2)(1 + u^2 + 2u\bfi)}{u^4 + 6u^2 + 1}$ & $t + 2$ & 1 & 3 & 0.15 & 0.25\\ \midrule
astroid\\
\ \includegraphics[scale=0.7]{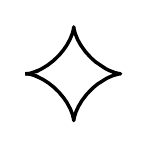} & 6 &
$\frac{(1-u^2)^3 + 8u^3\bfi}{(1+u^2)^3}$ & $\frac{2t-1}{t+2}$ & 3 & 5 & 0.75 & 1.65\\ \midrule
cardioid offset\!\!\!\!\! \\
\ \includegraphics[scale=0.7]{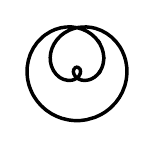} & 8 &
see worksheet \cite{WebsiteGeorg}
& & 0 & 1 & 0.30 & 0.37\\
\bottomrule
\end{tabular*}
\caption{Average CPU times $t_\text{rot}$ and $t_\text{inv}$ (seconds) for determining rotations and involutions of classical plane curves.}\label{tab:planesymmetries}
\end{table}

\begin{table}
\begin{tabular*}{\columnwidth}{m{7em}@{\extracolsep{\stretch{1}}}*{5}{ccccc}@{}}
\toprule
curve & degree & parametrization $\bfx$ & $u(t)$ & \#$_\text{inv}$ & $t_\text{inv}$ \\
\midrule
twisted cubic \\
\includegraphics[scale=0.13]{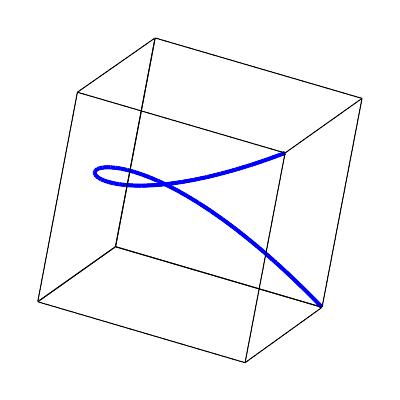} & 3 & $\left( u, u^2, u^3 \right)$ & $ \frac{1}{t+1}$ & 1 & 0.26 \\ \midrule
cusp\\
\includegraphics[scale=0.13]{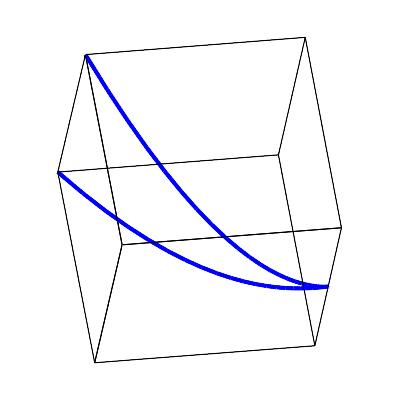}                & 4 & $\left( u^2, u^3, u^4 \right)$ & $\frac{t-1}{t+1}$ & 1 & 0.52\\ \midrule
axial sym. 1\\
\includegraphics[scale=0.13]{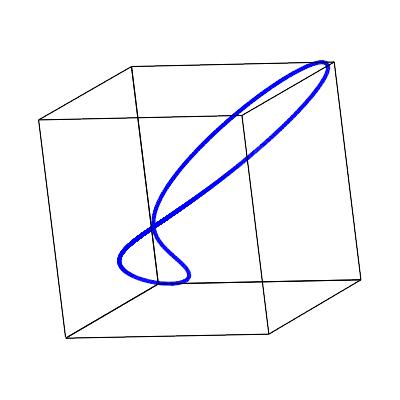}           & 4 & $\left( \frac{u^3+u}{u^4+1},\frac{u^3}{u^4+1},\frac{u^2}{u^4+1}
 \right)$ & $t+1$  &  1 & 2.22 \\ \midrule
crunode\\
\includegraphics[scale=0.13]{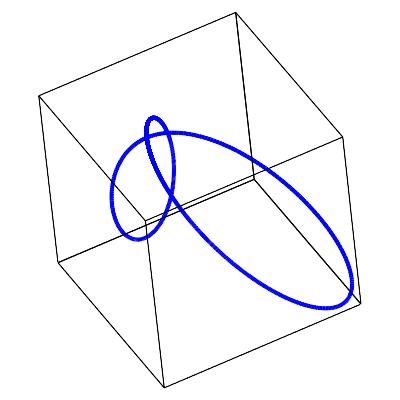}             & 4 & $\left( \frac{u}{1+u^4}, \frac{u^2}{1+u^4}, \frac{u^3}{1+u^4} \right)$ & $t + 2$ & 3 & 39.6\\ \midrule
inversion 1\\
\includegraphics[scale=0.13]{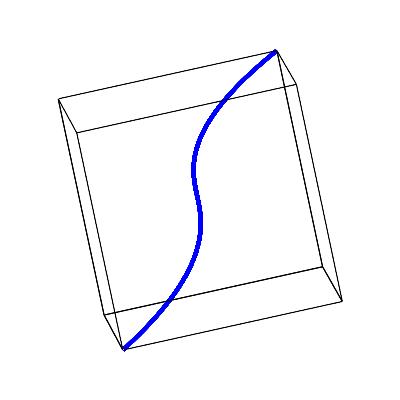} &  7 & $\left( u^7   , u^3 + u, u^5 + u^3 \right)$ & $\frac{t-2}{t+1}$ & 1 & 6.8\\ \midrule
space rose\\
\includegraphics[scale=0.13]{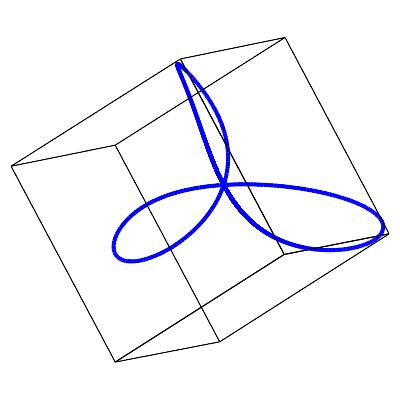} &  8 & $\left(\frac{1-3u^2}{(1+u^2)^2}  , \frac{(1-3u^2)u}{(1+u^2)^2}, \frac{(1-3u^2)u^3}{(1+u^2)^4} \right)$ & $t-1$ & 1 & 57.6 \\ \midrule
inversion 2\\
\includegraphics[scale=0.13]{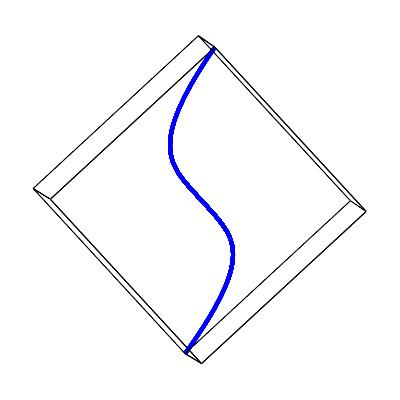} & 11 & $\left( u^{11}, u^3 + u, u^5 + u^3 \right)$ & $\frac{t-2}{t+1}$ & 1 & 75.6\\
\bottomrule
\end{tabular*}
\caption{Average CPU time $t_\text{inv}$ (seconds) for involutions of several space curves.}\label{tab:spaceinvolutions}
\end{table}

\subsection{An example: involutions of the twisted cubic}\label{sec:twistedcubic}
\noindent Let $\CCC$ be the \emph{twisted cubic}, defined parametrically as the image of the map
\begin{equation}\label{eq:TwistedCubic}
\bfx: \RR\longrightarrow \CCC\subset \RR^3,\qquad
t\longmapsto \left( \frac{1}{t+1}, \frac{1}{(t+1)^2}, \frac{1}{(t+1)^3} \right).
\end{equation}
From \eqref{eq:gammaofb} and using that $\Delta = -1 -bc$ we find
\[ c(b) = \frac{(b+2)(25b^4 + 89b^3 + 228b^2 + 262b + 350)}{14(b^4 + 2b^3 + 6b^2 + b + 14)},\]
\[ \Delta(b)=-\frac{(b + 1)(25b + 14)(b^4 + 4b^3 + 10b^2 + 12b + 14)}{14(b^4 + 2b^3 + 6b^2 + b + 14)}.\]
One obtains $Q(b)$ from \eqref{eq:diff1}, \eqref{eq:diff2}, and \eqref{eq:diff1xdiff2} and $\bfb(b)$ from evaluating \eqref{eq:fundam} at $t = 0$. Substituting $Q(b), \bfb(b)$, and $c(b)$ into \eqref{eq:fundam}, one finds that the coefficients of the powers of $t$ in the numerator have gcd
\[ (b+2)(25b + 14)(b^4 + 4b^3 + 10b^2 + 12b + 14)\]
for direct transformations and
\[
(25b + 14)(b^4 + 4b^3 + 10b^2 + 12b + 14)
\]
for opposite transformations. Since the M\"obius transformation is invertible, $\Delta(b)$ is nonzero and there is only one relevant factor $P(b) = b + 2$ for the direct transformations. Substituting $b = -2$ into $\Delta(b), \bfb(b)$, and $c(b)$, one finds the M\"obius transformation $\varphi(t) = -t - 2$ with corresponding axial symmetry
\[
f(\bfx) = Q\bfx + \bfb, \qquad
Q    = \begin{bmatrix} -1 & 0 &  0\\  0 & 1 &  0\\  0 & 0 & -1 \end{bmatrix},\qquad
\bfb = \begin{bmatrix} 0 \\ 0 \\ 0 \end{bmatrix}.
\]
This symmetry is depicted in Figure \ref{fig:TwistedCubic} by connecting corresponding points by lines. There are no additional symmetries for the case $d = 0$.

\subsection{Performance}\label{sec:performance}
\noindent We test the performance of the algorithms in Section \ref{sec:determinesym} for several classical curves on a Dell XPS 15 laptop, with 2.4 GHz i5-2430M processor and 6 GB RAM. Additional technical details are provided in the \Sage~worksheet \cite{WebsiteGeorg}.

For each curve, Tables \ref{tab:planesymmetries} and \ref{tab:spaceinvolutions} list the degree, a standard parametrization, a reparametrization $u(t)$ that brings this curve into general position, the number $\#_\text{rot}$ and $\#_\text{inv}$ of (nontrivial) rotations and involutions found, and the average CPU times $t_\text{rot}$ and $t_\text{inv}$ (seconds) of the computations. In each case the algorithm for finding rotations performs better than the algorithm for finding involutions. The curves ``inversion 1'' and ``inversion 2'' are constructed to have precisely one central inversion.

Note that the algorithm for finding involutions of space curves performs significantly worse for the crunode in Table \ref{tab:spaceinvolutions} than for the other curves of degree four. The reason seems to be that, besides the degree of a parametrization, the sizes of the coefficients greatly influence the performance of the algorithms. Even rational numbers with relatively small numerator and denominator get blown up by simple arithmetic operations. This is a common problem when computing in exact arithmetic.

Most of the computation time is spent by substituting the symmetry and M\"obius transformation in \eqref{eq:fundam} and finding the polynomial conditions on the parameter $b$.

\section{Conclusion}\label{sec:conclusion}
\noindent We have provided effective methods for determining the involution symmetries of a plane or space curve defined by a rational parametrization, and the rotation symmetries of a rational plane or Pythagorean-hodograph space curve. Examples were given, and the algorithms have been implemented in \Sage. Experiments show that we can generally quickly compute the rotation and involution symmetries of curves of relatively low degree. The current approach cannot guarantee to find all the rotation symmetries of a space curve efficiently, except in the case of Pythagorean-hodograph curves. For the general case, an alternative strategy seems to be required.


\end{document}